\newtheorem{thm}{Theorem}
\newtheorem{lm}[thm]{Lemma}
\newtheorem{pr}[thm]{Proposition}
\theoremstyle{definition}
\newtheorem{rem}[thm]{Remark}
\newcommand{\Rad}{\mathop{\mathrm{Rad}}\nolimits}
\let \al         =\alpha
\let \ep         =\varepsilon      
\let \te         =\theta        
\let \phi         =\varphi
\title{When a completion of the universal enveloping algebra is a Banach PI-algebra?}
\author{O. Yu. Aristov}
\email{aristovoyu@inbox.ru}
\keywords{Banach PI-algebra, universal enveloping algebra, nilpotent radical, polynomial growth, Banach space representation}
\subjclass[2020]{47L10, 16R99, Secondary 16S30, 47B40}
\begin{document}
 \maketitle
\begin{abstract}
 We prove that a Banach algebra $B$ that is a completion of the universal enveloping algebra of a finite-dimensional complex Lie algebra $\mathfrak{g}$ satisfies a polynomial identity if and only if the nilpotent radical $\mathfrak{n}$ of $\mathfrak{g}$ is associatively  nilpotent in $B$. Furthermore, this holds if and only if a certain polynomial growth condition is satisfied on~$\mathfrak{n}$.
\end{abstract}

\markright{Completion of the universal enveloping algebra}

\subsection*{Introduction}
The theory of representations of a finite-dimensional complex Lie algebra~$\mathfrak{g}$ on a Banach space and, more generally, Lie-algebra homomorphisms from $\mathfrak{g}$ to a Banach algebra unexpectedly turned out to be not as trivial as it might seem at first glance (see the book \cite{BS01}). Indeed, in the solvable case all irreducible representations of~$\mathfrak{g}$ are one-dimensional. Moreover, it was proved by J.~Taylor that in the semisimple case all (not only irreducible) representations of~$\mathfrak{g}$ are finite dimensional \cite{T2}. Nevertheless, a general representation on a Banach space can be quite complicated even when $\mathfrak{g}$ is nilpotent.

Here we consider arbitrary finite-dimensional Lie algebras but restrict ourself to representations (and homomorphisms) with range generating a Banach algebra satisfying a polynomial identity (a PI-algebra). We give an answer for the question: \emph{When does a Banach algebra that is a completion of the universal enveloping algebra  $U(\mathfrak{g})$ satisfy a \emph{PI}} by providing several necessary and sufficient conditions (algebraic and analytic) in terms of the nilpotent radical~$\mathfrak{n}$ of~$\mathfrak{g}$ (Theorems~\ref{1crit} and~\ref{forHom}).  We also include some examples, which show how the criterion works in concrete cases.

According to the nature of the conditions, the argument can be divided in two parts, algebraic and analytic.
The proof of the algebraic part is based on two results in the algebraic PI-theory, a theorem of Bahturin~\cite{Ba85} on PI-quotients in the semisimple case and the Braun-Kemer-Razmyslov theorem on the nilpotency of the Jacobson  radical. In the proof of the analytic part we use  a result of Turovskii~\cite{Tu87} on topological nilpotency, the theorem of Taylor on semisimple Lie algebras mentioned above, and a small piece of the theory of generalized scalar operators.

It is worth noting that this study was initially motivated by non-commutative spectral theory.
In \cite{Do05} Dosi considered a Fr\'echet algebra $\mathfrak{F}_\mathfrak{g}$, `the algebra of formally-radical functions', associated with a nilpotent Lie algebra $\mathfrak{g}$. His goal was applications to non-commutative spectral theory; see \cite{Do09C,Do10A,Do10B,DoSb}. (Dosi's construction is extended to the case of a general solvable Lie algebra in the author's article \cite{ArA06}. It is also shown there that $\mathfrak{F}_\mathfrak{g}$ is an Arens-Michael algebra, i.e., it can be approximated by Banach algebras.)
It is proved in \cite[Lemma~4]{Do05} that in the nilpotent case an embedding $\mathfrak{g}$ to a Banach algebra $B$ can be extended to a continuous  homomorphism $\mathfrak{F}_\mathfrak{g}\to B$ if and only if $\mathfrak{g}$ is supernilpotent, i.e., every element of $[\mathfrak{g},\mathfrak{g}]$ (which is equal to $\mathfrak{n}$ in this case) is associatively nilpotent. This article comes from a desire to understand what form this condition can take in the general case. Our main results show that the property to satisfy a PI, which is formally weaker that the supernilpotency, is reasonable in this context.

This paper is just the beginning of work on PI-completions. As a continuation of the study, we discuss such completions of the algebra of analytic functionals on a complex Lie group in the subsequent articles  \cite{ArPC16,ArPC15}, which  contain the following topics: a generalization of Theorem~\ref{1crit}, a relationship with large-scale geometry and a decomposition into an analytic smash product.

\subsection*{The statement of main results}
Recall that an associative algebra $A$ (in our case over a field, which usually is $\mathbb{C}$) \emph{satisfies a polynomial identity } (in short, a PI-\emph{algebra)} if there is a non-trivial non-commutative polynomial $p$ (i.e., an element of a free algebra in $n$ generators) such that   $p(a_1,\ldots,a_n)=0$ for all $a_1,\ldots,a_n\in A$. PI-algebras can be both unital and non-unital. But we usually assume that associative algebras are unital unless otherwise stated.  Banach PI-algebras are discussed in \cite{Kr87} and \cite{Mu94} but we do not use results contained there. For up-to-date information on general PI-algebras see \cite{AGPR} or \cite{KKR16}.

Recall that the \emph{nilpotent radical} $\mathfrak{n}$ of a Lie algebra $\mathfrak{g}$ is the intersection of the kernels of all irreducible representations of~$\mathfrak{g}$; \cite[Chapter~I, \S\,5.3, p.\,44, Definition~3]{Bou} or \cite[p.\,27, \S\,1.7.2]{Dix}.

The following two theorems are our main results.

\begin{thm}\label{1crit}
Suppose that $\mathfrak{g}$ is a finite-dimensional complex Lie subalgebra of a Banach algebra~$B$. Let $\|\cdot\|$ denote the norm on~$B$ and $\mathfrak{n}$ the nilpotent radical of~$\mathfrak{g}$. If $B$ is generated by $\mathfrak{g}$ as a Banach algebra, then the following conditions are equivalent.

\emph{(1)}~$B$ is a \emph{PI}-algebra.

\emph{(2a)}~Every  element of $\mathfrak{n}$ is nilpotent.

\emph{(2b)}~The non-unital associative subalgebra of~$B$ generated by~$\mathfrak{n}$ is nilpotent.

\emph{(3a)}~$e^b-1$  is nilpotent for every  $b\in\mathfrak{n}$.

\emph{(3b)}~There is $d\in\mathbb{N}$ such that $e^b-1$ is nilpotent of degree at most~$d$ for every $b\in \mathfrak{n}$.

\emph{(4)}~There are $C>0$ and $\al>0$ such that $\|e^b\|\le C (1+\|b\|)^\al$ for every $b\in\mathfrak{n}$.
\end{thm}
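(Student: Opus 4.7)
The plan is to derive the six-fold equivalence by combining a small number of routine implications with two substantive steps, corresponding to the algebraic and analytic halves signaled in the introduction. The easy ones, (2b)~$\Rightarrow$~(2a), (3b)~$\Rightarrow$~(3a) and (2b)~$\Rightarrow$~(3b)~$\Rightarrow$~(4), fall out by truncating the exponential: if the non-unital subalgebra $N$ generated by $\mathfrak{n}$ satisfies $N^d=0$, then for any $b\in\mathfrak{n}$ the element $e^b-1$ lies in $N$, so $(e^b-1)^d=0$ and $\|e^b\|\le\sum_{k<d}\|b\|^k/k!$.

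To close the loop among (2a), (2b), (3a), (3b), (4), I would prove (3a)~$\Rightarrow$~(2a), (4)~$\Rightarrow$~(2a), and (2a)~$\Rightarrow$~(2b). For (3a)~$\Rightarrow$~(2a), fix $b\in\mathfrak{n}$ and work inside the commutative closed subalgebra $C\subseteq B$ generated by $b$; applying (3a) to $tb$ for every $t\in\mathbb{C}$ gives $t\cdot\mathrm{Spec}_C(b)\subseteq 2\pi i\mathbb{Z}$ for all $t$, hence $\mathrm{Spec}_C(b)=\{0\}$. Writing $e^b-1=bf(b)$ with $f$ entire and $f(0)=1$, quasinilpotency of $b$ makes $f(b)$ invertible, and $(e^b-1)^d=0$ then yields $b^d=0$. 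For (4)~$\Rightarrow$~(2a), the estimate applied to $tb$ shows that $t\mapsto e^{tb}$ is an entire $B$-valued function of polynomial growth of order $\le\alpha$; a vector-valued Liouville theorem makes it a polynomial in $t$ of degree $\le\lfloor\alpha\rfloor$, and comparing Taylor coefficients at $t=0$ gives the uniform bound $b^{\lfloor\alpha\rfloor+1}=0$. For (2a)~$\Rightarrow$~(2b), once each basis element $b_i$ of $\mathfrak{n}$ is nilpotent in $B$, the PBW basis of $U(\mathfrak{n})$ collapses (powers $b_i^{e_i}$ vanish for $e_i$ large) and shows that $N$ is finite-dimensional; Engel's theorem, applied to the Lie algebra of nilpotent left multiplications $x\mapsto bx$ for $b\in\mathfrak{n}$ acting on $N$, then yields the algebraic nilpotency of $N$. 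Alternatively, Turovskii's theorem on topological nilpotency of Lie subalgebras of quasinilpotent elements may be invoked first to obtain topological nilpotency of the closure of $N$, and refined from there.

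For the algebraic implication (1)~$\Rightarrow$~(2b), note that the non-closed subalgebra $A\subseteq B$ generated by $\mathfrak{g}$ inherits the PI property and is finitely generated, so by the Braun-Kemer-Razmyslov theorem $\Rad A$ is nilpotent; it therefore suffices to prove $\mathfrak{n}\subseteq\Rad A$. Taking any primitive ideal $P\subseteq A$, Kaplansky's theorem gives $A/P\cong M_n(D)$ with $D$ finite over its center. Bahturin's theorem, applied to the semisimple Levi subalgebra $\mathfrak{s}\subseteq\mathfrak{g}$, forces the image of $U(\mathfrak{s})$ in $A/P$ to be finite-dimensional and semisimple; a structural argument combining this with the characterization $\mathfrak{n}=[\mathfrak{g},\mathrm{rad}(\mathfrak{g})]$ and the fact that the image of $\mathfrak{g}$ generates $A/P$ then shows that the image of $\mathfrak{n}$ in $A/P$ vanishes. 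Intersecting over $P$ yields $\mathfrak{n}\subseteq\Rad A$, and the nilpotency of $\Rad A$ transports to $N$.

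The converse (2b)~$\Rightarrow$~(1) is more routine: because $\mathfrak{n}$ is a Lie ideal of $\mathfrak{g}$, $N$ is a two-sided ideal in $A$, and continuity of multiplication makes its closure $\overline{N}$ a closed two-sided ideal of $B$ with the same nilpotent degree. The quotient $B/\overline{N}$ is a Banach algebra generated by the image of $\mathfrak{g}/\mathfrak{n}$, which is reductive; by Taylor's theorem the closed subalgebra generated by the semisimple part is finite-dimensional, while the closed subalgebra generated by the abelian part is commutative. Since these two subalgebras commute, $B/\overline{N}$ is a finite module over a central commutative Banach subalgebra, hence PI. As $B$ is then an extension of a PI algebra by a nilpotent ideal, $B$ itself is PI. \emph{The main obstacle} I foresee is the algebraic step, specifically the verification of $\mathfrak{n}\subseteq\Rad A$: Bahturin's theorem directly controls only the semisimple contribution, and one needs a careful structural analysis to rule out residual contributions of the solvable radical in a general primitive PI quotient $M_n(D)$.
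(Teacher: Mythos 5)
Your overall scheme of implications is viable and parts of it are genuinely nicer than the paper's, but two steps contain real gaps.

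On the positive side, your analytic implications take a different and more elementary route. For (3a)$\Rightarrow$(2a) you exploit that $\mathfrak{n}$ is a complex vector space, so (3a) can be applied to every $tb$; the spectral mapping theorem then forces $\mathrm{Spec}(b)=\{0\}$, after which the $f(b)$-invertibility trick finishes. This avoids the paper's invocation of Turovskii's theorem (Lemma~\ref{GRlem}) to obtain quasinilpotency. For (4)$\Rightarrow$(2a) your vector-valued Liouville argument is correct (apply (4) to $tb$, $t\in\mathbb{C}$, get $\|e^{tb}\|\le C'(1+|t|)^\alpha$, conclude $t\mapsto e^{tb}$ is a polynomial and read off $b^{\lfloor\alpha\rfloor+1}=0$); the paper instead uses the theory of generalized scalar elements from Laursen--Neumann via Lemma~\ref{eqpogr}. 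Your Liouville route is shorter and more self-contained. For (2a)$\Rightarrow$(2b), the PBW-collapse plus Engel argument is a sound replacement for the paper's appeal to Lemma~\ref{nilte} (the Herstein--Small--Winter variation on Wedderburn). So these three steps are correct alternatives, and they buy you a proof that needs less machinery.

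The first genuine gap is in (1)$\Rightarrow$(2b), which you yourself flag. You try to establish $\mathfrak{n}\subseteq\Rad A$ by a structural analysis of each primitive quotient $A/P\cong M_n(D)$ using Bahturin's theorem on the image of $U(\mathfrak{s})$, but the step from ``the image of $U(\mathfrak{s})$ is finite-dimensional'' to ``the image of $\mathfrak{n}$ vanishes'' is exactly the missing content, and Bahturin gives no direct handle on the solvable part. The paper avoids this entirely: $\Rad A$ is the intersection of kernels of irreducible representations of $A$, every such representation pulls back to an irreducible representation of $\mathfrak{g}$, and by definition $\mathfrak{n}$ lies in the kernel of every irreducible representation of $\mathfrak{g}$ -- hence $\mathfrak{n}\subseteq\Rad A$ (Lemma~\ref{trrell2}(A)), with Bahturin only needed to show the unital subalgebra generated by $\mathfrak{s}$ is finite-dimensional (which, in the Banach setting, is superseded by Taylor's theorem). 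Then Braun--Kemer--Razmyslov gives nilpotency of $\Rad A$ and hence of products from $\mathfrak{n}$. You should drop the Kaplansky/primitive-quotient detour and use this correspondence argument directly.

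The second gap is in (2b)$\Rightarrow$(1): the assertion that the non-unital subalgebra $N$ generated by $\mathfrak{n}$ is automatically a two-sided ideal of $A$ because $\mathfrak{n}$ is a Lie ideal is false. Already in $U(\mathfrak{g})$ for the two-dimensional non-abelian Lie algebra with $[e_1,e_2]=e_2$, $\mathfrak{n}=\mathbb{C}e_2$ and $N=\mathbb{C}[e_2]_{\ge1}$, but $e_1e_2=e_2e_1+e_2\notin N$. What is true is that $U(\mathfrak{g})U(\mathfrak{n})_0=U(\mathfrak{n})_0U(\mathfrak{g})$, so the two-sided ideal $I$ generated by $\mathfrak{n}$ satisfies $I^d\subseteq U(\mathfrak{n})_0^dU(\mathfrak{g})$; once $N^d=0$ this gives $I^d=0$, and one should pass to $\overline{I}$ (a closed ideal of $B$) rather than $\overline{N}$. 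With that replacement, your description of the quotient $B/\overline{I}$ as generated by the reductive algebra $\mathfrak{g}/\mathfrak{n}$, together with Taylor's theorem on the semisimple part and the centrality of the abelian part, can be made to work; the paper phrases the final PI conclusion via Regev's theorem applied to $S\otimes U(\mathfrak{a})$ and then Lemma~\ref{extPI} on extensions.
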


Note that the conditions~(2b) and~(3b) are the uniform versions of the point-wise conditions (2a) and~(3a).

In Theorem~\ref{1crit} we assume that $\mathfrak{g}$ is embedded into $B$. Is not hard to extend the result to the case of arbitrary Lie-algebra homomorphism $\mathfrak{g}\to B$ (or, equivalently, a homomorphism $U(\mathfrak{g})\to B$). Namely, the following result holds.

\begin{thm}\label{forHom}
Suppose that $\mathfrak{g}$ is a finite-dimensional complex Lie algebra, $B$ is a Banach algebra and $\te\!:U(\mathfrak{g})\to B$ is a homomorphism of associative algebras. Let $\|\cdot\|$ be the norm on~$B$, $|\cdot|$ a norm on~$\mathfrak{n}$ and $U(\mathfrak{n})_0$ the augmentation ideal of $U(\mathfrak{n})$, i.e., the kernel of the trivial representation. If $\te$ has dense range, then the following conditions are equivalent.

\emph{(1)}~$B$ is a \emph{PI}-algebra.

\emph{(2a)}~$\te(\eta)$ is nilpotent for every  $\eta\in \mathfrak{n}$.

\emph{(2b)}~The non-unital associative algebra $\te(U(\mathfrak{n})_0)$ is nilpotent.

\emph{(3a)}~$e^{\te(\eta)}-1$ is nilpotent for every  $\eta\in \mathfrak{n}$.

\emph{(3b)}~There is $d\in\mathbb{N}$ such that $e^{\te(\eta)}-1$ is nilpotent of degree at most~$d$ for every $\eta\in \mathfrak{n}$.

\emph{(4)}~There are $C>0$ and $\al>0$ such that $\|e^{\te(\eta)}\|\le C (1+|\eta|)^\al$ for every $\eta\in\mathfrak{n}$.
\end{thm}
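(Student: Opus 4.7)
The plan is to reduce Theorem~\ref{forHom} to Theorem~\ref{1crit} by setting $\mathfrak{h}:=\te(\mathfrak{g})\subset B$ and applying the already-proven equivalences to the Lie subalgebra $\mathfrak{h}$ of $B$. Since $U(\mathfrak{g})$ is generated as an associative algebra by $\mathfrak{g}$, the dense-range assumption on $\te$ forces $B$ to be generated by $\mathfrak{h}$ as a Banach algebra, so the hypotheses of Theorem~\ref{1crit} are met once we observe that $\mathfrak{h}$ is a finite-dimensional complex Lie subalgebra of $B$, which is immediate because $\te|_\mathfrak{g}$ is a Lie homomorphism and $\dim\mathfrak{h}\le\dim\mathfrak{g}$.

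The key bridge is the identification of nilpotent radicals. Writing $\pi:=\te|_\mathfrak{g}\!:\mathfrak{g}\to\mathfrak{h}$, I would invoke the standard description $\mathfrak{n}=[\mathfrak{g},\mathrm{rad}(\mathfrak{g})]$ together with the fact that the solvable radical is preserved under surjective Lie-algebra homomorphisms to conclude that $\pi(\mathfrak{n})=[\mathfrak{h},\mathrm{rad}(\mathfrak{h})]$ equals the nilpotent radical $\mathfrak{n}_\mathfrak{h}$ of $\mathfrak{h}$. With this in hand, each condition of Theorem~\ref{1crit} applied to $(\mathfrak{h},B)$ translates into the corresponding condition of Theorem~\ref{forHom}: (1) is identical; (2a) and (3a) are reformulations via the surjectivity of $\pi|_\mathfrak{n}\!:\mathfrak{n}\to\mathfrak{n}_\mathfrak{h}$; (2b) uses that $\te(U(\mathfrak{n})_0)$ coincides with the non-unital subalgebra of $B$ generated by $\mathfrak{n}_\mathfrak{h}$ (since $U(\mathfrak{n})_0$ is non-unitally generated by $\mathfrak{n}$ and $\te(\mathfrak{n})=\mathfrak{n}_\mathfrak{h}$); and (3b) is a direct restatement.

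The only slightly delicate point is (4), where the norm on $\mathfrak{h}$ inherited from $B$ must be compared with the externally given norm $|\cdot|$ on $\mathfrak{n}$. Since both are norms on finite-dimensional spaces, this is handled by equivalence of norms: in one direction, the bound $\|\te(\eta)\|\le\|\te|_\mathfrak{n}\|\,|\eta|$ lets Theorem~\ref{1crit}(4) imply Theorem~\ref{forHom}(4) after absorbing the constant into $C$ and $\al$; in the other direction, every $b\in\mathfrak{n}_\mathfrak{h}$ admits a preimage $\eta\in\mathfrak{n}$ with $|\eta|\le M\|b\|$ for a fixed $M$ (by equivalence of norms on the finite-dimensional lift of $\mathfrak{n}_\mathfrak{h}$), which yields the converse implication. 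I do not anticipate a substantive obstacle here: the real content is already packaged inside Theorem~\ref{1crit}, and Theorem~\ref{forHom} amounts to a cosmetic upgrade from an embedded Lie subalgebra to an arbitrary homomorphism out of $U(\mathfrak{g})$.
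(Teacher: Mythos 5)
Your proposal is correct and follows essentially the same route as the paper: reduce to Theorem~\ref{1crit} applied to the image Lie subalgebra $\mathfrak{h}=\te(\mathfrak{g})\subset B$, identify $\te(\mathfrak{n})$ with the nilpotent radical of $\mathfrak{h}$ via $\mathfrak{n}=[\mathfrak{g},\mathrm{rad}\,\mathfrak{g}]$ and surjectivity of the induced map on solvable radicals, and then compare the two versions of condition~(4) using the boundedness of $\te|_{\mathfrak{n}}$ (the paper instead rewrites its (4)$\Rightarrow$(3a) argument with $\te(\eta)$ in place of $b$, but the content is the same).
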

The proof of Theorem~\ref{forHom} is placed below after the proof of Theorem~\ref{1crit}, to which we now turn.

\subsection*{The algebraic argument}
As mentioned in the introduction, the proof of the theorem is divided in two parts, algebraic and analytic. The algebraic part is in the following proposition.

\begin{pr}\label{algcr}
Let $\mathfrak{g}$ be a finite-dimensional Lie subalgebra of an associative algebra~$A$ over a field of characteristic~$0$, $\mathfrak{s}$ a Levi subalgebra and  $\mathfrak{n}$ the nilpotent radical of~$\mathfrak{g}$. Suppose that $A$ is generated by $\mathfrak{g}$ as an associative  algebra. Then it is a \emph{PI}-algebra if and only if the  unital subalgebra generated by~$\mathfrak{s}$ is finite dimensional and
every  element of $\mathfrak{n}$ is nilpotent.
\end{pr}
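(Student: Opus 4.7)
The argument rests on the Levi decomposition $\mathfrak{g}=\mathfrak{s}\ltimes\mathfrak{r}$ and the structural fact that $\mathfrak{g}/\mathfrak{n}$ is reductive, so that it splits as $\mathfrak{s}\oplus(\mathfrak{r}/\mathfrak{n})$ with $\mathfrak{r}/\mathfrak{n}$ abelian and commuting with $\mathfrak{s}$. I would handle the two directions separately, invoking Bahturin's theorem in the forward direction and the Braun-Kemer-Razmyslov theorem in the backward direction.

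\emph{Necessity.} Assume $A$ is PI. Then the subalgebra $A_{\mathfrak{s}}$ generated by $\mathfrak{s}$ is PI; being a homomorphic image of $U(\mathfrak{s})$, Bahturin's theorem on PI-quotients in the semisimple case forces $\dim A_{\mathfrak{s}}<\infty$. Moreover, $A$ is finitely generated (by the finite-dimensional $\mathfrak{g}$), so Braun-Kemer-Razmyslov gives $J(A)$ nilpotent. By Kaplansky's theorem every primitive quotient $\bar A$ of $A$ is finite-dimensional central simple and acts faithfully on a finite-dimensional simple module $V$. Because $A$ is generated by $\mathfrak{g}$ as an associative algebra, the $\mathfrak{g}$-submodules of $V$ coincide with the $A$-submodules, so $V$ is an irreducible \emph{finite-dimensional} representation of $\mathfrak{g}$; by the defining property of the nilpotent radical, $\mathfrak{n}$ acts as zero on $V$. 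Hence $\mathfrak{n}\subseteq\bigcap_{\al}\ker\pi_{\al}=J(A)$, which is nilpotent, and every $\eta\in\mathfrak{n}$ is nilpotent.

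\emph{Sufficiency.} Assume $A_{\mathfrak{s}}$ is finite-dimensional and every element of $\mathfrak{n}$ is nilpotent. Let $N$ be the two-sided ideal of $A$ generated by $\mathfrak{n}$. Since $[\mathfrak{g},\mathfrak{n}]\subseteq\mathfrak{n}$, one can commute $\mathfrak{g}$-elements past $\mathfrak{n}$-elements modulo $\mathfrak{n}$, so $N$ coincides with the right ideal $\mathfrak{n}A_1=A_1\mathfrak{n}$, where $A_1$ is $A$ with a unit adjoined. The quotient $A/N$ is generated by the image of $\mathfrak{s}$, which spans the finite-dimensional quotient $\bar A_{\mathfrak{s}}$ of $A_{\mathfrak{s}}$, and the image of $\mathfrak{r}/\mathfrak{n}$, which is commutative and commutes with $\bar A_{\mathfrak{s}}$; thus $A/N$ is a quotient of $\bar A_{\mathfrak{s}}\otimes C$ with $C$ commutative, and is PI. For the nilpotency of $N$, fix a basis $\{x_1,\ldots,x_m\}$ of $\mathfrak{n}$ adapted to the lower central series. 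The PBW theorem applied to $U(\mathfrak{n})\to A$ shows that the subalgebra $A_{\mathfrak{n}}$ generated by $\mathfrak{n}$ is spanned by the ordered monomials $x_1^{a_1}\cdots x_m^{a_m}$; the nilpotency of each $x_i$ bounds the exponents, so $A_{\mathfrak{n}}$ is finite-dimensional, and a PBW-filtration argument (whose associated graded is a quotient of $k[x_1,\ldots,x_m]/(x_i^{N_i})$, a finite-dimensional nilpotent commutative algebra) shows $A_{\mathfrak{n}}$ itself is nilpotent. Combined with $A_1\mathfrak{n}\subseteq\mathfrak{n}A_1$ this gives $N^k\subseteq A_{\mathfrak{n}}^k A_1=0$ for large $k$. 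A PI-identity of $A/N$, iterated $k$ times in disjoint variables, is then a nontrivial PI-identity of $A$.

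\emph{Main obstacle.} The subtle step is extracting nilpotency of $A_{\mathfrak{n}}$ (and hence of $N$) in the sufficiency direction: one must combine the PBW bound on the dimension of $A_{\mathfrak{n}}$ with the passage from finite-dimensional nil to nilpotent, and check that the ordering used in PBW is truly compatible with the nilpotency relations in $A$. Once this is in hand, the ``PI on top of a nilpotent ideal is PI'' closing step is entirely standard.
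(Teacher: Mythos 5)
Your necessity argument and your final assembly (extension of a PI quotient by a nilpotent ideal, tensoring a finite-dimensional algebra with a commutative one for $A/N$) follow the same outline as the paper's proof; your detour through Kaplansky and the finite-dimensionality of simple modules over an affine PI-algebra is a valid alternative to the paper's direct lifting of irreducible representations to $U(\mathfrak{g})$, at the cost of invoking more of the PI structure theory. The genuine divergence, and the gap, is in the nilpotency of $A_{\mathfrak{n}}$. The paper cites a theorem of Herstein--Small--Winter (its Lemma~\ref{nilte}) for exactly this point. You instead offer a PBW/associated-graded argument, and it does not close.

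Two problems. First, the implication ``$\operatorname{gr} A_{\mathfrak{n}}$ is nilpotent, hence $A_{\mathfrak{n}}$ is nilpotent'' is false in general for a bounded increasing filtration: take $A_{0}=kx\oplus kx^{2}$ with $x^{3}=x^{2}\neq 0$ and the degree filtration $F_{1}=kx\subset F_{2}=A_{0}$; then $\operatorname{gr}A_{0}$ is the nilpotent algebra $(x)/(x^{3})$ but $A_{0}$ is not nilpotent. (Here $x$ is not nilpotent, so this is not a counterexample to your Proposition, but it does refute the general principle you are leaning on, so that step supplies no information.) Second, your proposed repair, ``finite-dimensional nil implies nilpotent,'' presupposes that $A_{\mathfrak{n}}$ is a nil algebra; you have only shown that a basis of $\mathfrak{n}$ (indeed all of $\mathfrak{n}$) consists of nilpotent elements, not that arbitrary elements of $A_{\mathfrak{n}}$ are nilpotent, and nilpotent generators do not force an algebra to be nil (e.g.\ $E_{12},E_{21}$ generate $M_{2}(k)$). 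Your PBW estimate showing $\dim A_{\mathfrak{n}}<\infty$ \emph{is} correct (an ordered monomial with $a_{i}\ge N_{i}$ contains the contiguous factor $x_{i}^{N_{i}}=0$), and it can be combined with Engel's theorem to finish: left multiplication embeds $\mathfrak{n}$ as a Lie algebra of nilpotent operators on the finite-dimensional space $A_{\mathfrak{n}}$, so it can be simultaneously strictly upper-triangularized, and the associative algebra it generates is then nilpotent. That, or a direct citation of Herstein--Small--Winter as the paper does, is what is needed to make the sufficiency direction sound.
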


For the proof we need a tuple of lemmas. 

\begin{lm}\label{trrell2}
Let $A$ be an associative algebra over a field, $\mathfrak{g}$ a finite-dimensional Lie subalgebra that generates $A$ as an associative algebra, and~$\mathfrak{n}$  the nilpotent radical of~$\mathfrak{g}$.

\emph{(A)}~Then $\mathfrak{n}$ is contained in the Jacobson radical of $A$.

\emph{(B)}~If, in addition, $A$ is a PI-algebra, then $\mathfrak{n}$ is associatively nilpotent, i.e., there is $d\in\mathbb{N}$ such that $b_1\cdots b_d=0$ for all $b_1,\ldots, b_d\in\mathfrak{n}$.
\end{lm}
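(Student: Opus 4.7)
My plan is to treat the two parts separately. Part~(A) follows directly from the definition of the nilpotent radical, once one observes that irreducibility as an $A$-module forces irreducibility as a $\mathfrak{g}$-module; part~(B) then combines~(A) with the Braun--Kemer--Razmyslov theorem on the nilpotency of the Jacobson radical of a finitely generated PI-algebra.

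For part~(A), let $V$ be an irreducible $A$-module. Since $\mathfrak{g}$ generates $A$ as an associative algebra, any $\mathfrak{g}$-stable subspace of $V$ is automatically stable under the associative subalgebra generated by $\mathfrak{g}$, which is $A$ itself; hence $V$ is irreducible already as a $\mathfrak{g}$-module. The definition of $\mathfrak{n}$ as the intersection of kernels of the irreducible $\mathfrak{g}$-representations then forces $\mathfrak{n} V=0$, so $\mathfrak{n}\subseteq\mathrm{Ann}_A(V)$. Intersecting over all irreducible $A$-modules $V$ yields $\mathfrak{n}\subseteq\Rad A$.

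For part~(B), the new inputs are that $A$ is a PI-algebra and that $A$ is finitely generated as an associative algebra (the latter because $\mathfrak{g}$ is finite-dimensional and generates $A$). This is exactly the setting of the Braun--Kemer--Razmyslov theorem, which guarantees that $\Rad A$ is a nilpotent ideal; so $(\Rad A)^d=0$ for some $d\in\mathbb{N}$. Combined with the inclusion $\mathfrak{n}\subseteq\Rad A$ from~(A), this gives $b_1\cdots b_d=0$ for all $b_1,\ldots,b_d\in\mathfrak{n}$, as required. The main ``obstacle'' is thus the invocation of Braun--Kemer--Razmyslov in~(B); apart from this black-box input, the proof is essentially a formal reading of the definitions, and the only step worth a sanity check is that the definition of $\mathfrak{n}$ is being applied to the (a~priori infinite-dimensional) $\mathfrak{g}$-module $V$ without any hidden finite-dimensionality assumption.
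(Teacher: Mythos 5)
Your argument matches the paper's proof almost line for line: part~(A) runs through the characterization of $\mathfrak{n}$ as an intersection of kernels of irreducible representations of $\mathfrak{g}$ (after observing that an irreducible $A$-module is irreducible over $\mathfrak{g}$ because $\mathfrak{g}$ generates $A$), and part~(B) combines this with the Braun--Kemer--Razmyslov theorem, using that $A$ is finitely generated since $\mathfrak{g}$ is finite-dimensional. So the approach is the same. But the ``sanity check'' you flag at the end is not a throwaway remark --- it is a genuine gap, and the paper's proof passes over it silently. Bourbaki and Dixmier (the very references cited when $\mathfrak{n}$ is introduced earlier in this paper) define the nilpotent radical as the intersection of kernels of the \emph{finite-dimensional} simple representations of $\mathfrak{g}$, whereas the irreducible $A$-module $V$ in the argument has no reason to be finite-dimensional over the base field. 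In fact part~(A), in the generality in which it is stated (no PI hypothesis), is simply false: take $\mathfrak{g}=\mathfrak{h}$ the $3$-dimensional Heisenberg algebra, so $\mathfrak{n}=[\mathfrak{h},\mathfrak{h}]=\mathbb{C}z$ with $z$ central, and $A=U(\mathfrak{h})$. The irreducible representation of $A$ on $\mathbb{C}[t]$ sending the generators $x,y$ to multiplication by $t$ and to $d/dt$ makes $z$ act by a nonzero scalar, so $z\notin\Rad A$.

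The gap closes once one invokes the PI hypothesis --- harmless, because the lemma is only ever applied (in the proof of Proposition~\ref{algcr}) when $A$ is assumed to be a PI-algebra. Let $P$ be the annihilator of an irreducible $A$-module $V$. Since $A/P$ is a primitive PI-ring, Kaplansky's theorem makes it a simple algebra finite-dimensional over its centre $Z$. Since $A$ is a finitely generated $k$-algebra (being a quotient of $U(\mathfrak{g})$), the Artin--Tate lemma shows $Z$ is a finitely generated $k$-algebra; being a field, $Z$ is then a finite extension of $k$ by Zariski's lemma. Hence $A/P$, and so $V$, is finite-dimensional over $k$, and the definition of $\mathfrak{n}$ does apply to $V$. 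If you want to keep part~(A) as a standalone statement, you should either add the PI hypothesis or replace the appeal to the ``all irreducible representations'' phrasing with this finite-dimensionality argument; otherwise the lemma as written has a counterexample.
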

The proof of Part~(B) is based on a deep result in the PI-theory,  the Braun-Kemer-Razmyslov theorem on the nilpotency of the Jacobson  radical.
\begin{proof}
It is obvious that $A$ is a quotient of $U(\mathfrak{g})$ and, in particular,  finitely generated.

(A)~Recall that the Jacobson  radical $\Rad A$ of $A$ is the intersection of the kernels of all irreducible representations of~$A$. On the other hand,  the nilpotent radical  $\mathfrak{n}$ of the Lie algebra $\mathfrak{g}$ is the intersection of the kernels of all irreducible representations of~$\mathfrak{g}$.  Since irreducible representations of $U(\mathfrak{g})$ is in one-to-one correspondence with irreducible representations of $\mathfrak{g}$ and every irreducible representation of $A$ can be lifted to an irreducible representation of $U(\mathfrak{g})$, we have that $\mathfrak{n}\subset \Rad A$.

(B)~The Braun-Kemer-Razmyslov theorem asserts that the Jacobson radical of a finitely generated PI-algebra over a commutative Jacobson ring (in particular, over a field)  is nilpotent \cite[p.\,149, Theorem 4.0.1]{KKR16}. This completes the proof.
\end{proof}

A theorem of Wedderburn asserts that a  finite-dimensional  associative  algebra  over  a
field (of arbitrary characteristic) with linear basis  consisting  of  nilpotent  elements is nilpotent; see, e.g., \cite[p.\,56, Theorem 2.3.1]{Her}. We need a more general result in characteristic~$0$: If $\mathfrak{h}$ is a Lie subalgebra of an associative algebra and $\mathfrak{h}$ is generated by finitely many associatively nilpotent elements, then the non-unital associative subalgebra  generated by the solvable radical of $\mathfrak{h}$ is nilpotent \cite[Theorem 8]{HAW}. In particular, we have the following lemma.

\begin{lm}\label{nilte}
Let $A_0$ be a non-unital associative algebra over a field of characteristic~$0$. Suppose that $A_0$ is generated by a nilpotent Lie subalgebra~$\mathfrak{h}$. If every element of $\mathfrak{h}$ is nilpotent, then so is~$A_0$.
\end{lm}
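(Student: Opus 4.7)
The plan is to deduce this as an immediate specialization of the result \cite[Theorem~8]{HAW} that is quoted in the paragraph just above the statement. That theorem asserts that if a Lie subalgebra of an associative algebra over a field of characteristic zero is generated by finitely many associatively nilpotent elements, then the non-unital associative subalgebra generated by its solvable radical is nilpotent. The proof reduces to checking that the hypotheses are met.

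First, since $\mathfrak{h}$ is nilpotent as a Lie algebra, it is in particular solvable and hence coincides with its own solvable radical. Second, to produce a finite set of associatively nilpotent Lie generators of $\mathfrak{h}$, I would just take a vector space basis of $\mathfrak{h}$. This is finite because $\mathfrak{h}$ is finite-dimensional in every situation where the lemma is invoked (notably in Lemma~\ref{trrell2}(B), where $\mathfrak{h}=\mathfrak{n}$ is the nilpotent radical of a finite-dimensional Lie algebra~$\mathfrak{g}$), and each basis vector is associatively nilpotent by the assumption of the lemma. Thus a basis certainly generates $\mathfrak{h}$ even as a vector space, let alone as a Lie algebra.

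With both hypotheses verified, \cite[Theorem~8]{HAW} yields that the non-unital associative subalgebra generated by the solvable radical of $\mathfrak{h}$ — that is, the non-unital associative subalgebra generated by $\mathfrak{h}$ itself — is nilpotent. Since $A_0$ is non-unital and generated as an associative algebra by $\mathfrak{h}$, this subalgebra is precisely $A_0$, and we conclude that $A_0$ is nilpotent.

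There is essentially no obstacle here, because the deep content has been absorbed into the cited theorem and the phrasing ``In particular\ldots'' in the preamble signals that the lemma is merely a restatement of it under more restrictive hypotheses. The only point worth flagging is the implicit appeal to the finite-dimensionality of $\mathfrak{h}$ in producing the finite generating set; this is automatic in all the contexts in which Lemma~\ref{nilte} will be applied.
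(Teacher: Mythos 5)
Your proposal is correct and follows exactly the route the paper takes: the paper offers no separate proof of Lemma~\ref{nilte}, but derives it from \cite[Theorem~8]{HAW} via the sentence ``In particular, we have the following lemma,'' precisely because a nilpotent Lie algebra is its own solvable radical and a (finite) basis of associatively nilpotent elements supplies the required finite Lie-generating set. Your caveat about the implicit finite-dimensionality (or at least finite generation) of $\mathfrak{h}$ is a fair observation, since the lemma is only ever invoked for the finite-dimensional $\mathfrak{n}$.
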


\begin{rem}
In \cite[Lemma 2.2]{Do09C}, Dosi gave a direct proof of the lemma in the partial case when $\mathfrak{h}=[\mathfrak{g},\mathfrak{g}]$ for a nilpotent $\mathfrak{g}$.
\end{rem}

\begin{lm}\label{extPI}
An extension of a \emph{PI}-algebra  \emph{(}over an arbitrary commutative ring\emph{)} by an ideal that is a \emph{PI}-algebra is  also a \emph{PI}-algebra.
\end{lm}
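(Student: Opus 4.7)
The plan is to build a single PI for $A$ by substituting copies of a PI for $A/I$ into a PI for $I$. Let $p(x_1,\ldots,x_n)$ be a non-trivial identity satisfied by $A/I$, so that $p(a_1,\ldots,a_n)\in I$ for all $a_i\in A$; and let $q(y_1,\ldots,y_m)$ be a non-trivial identity of $I$. Using $mn$ fresh indeterminates $x_{ij}$, set
$$\tilde p:=q\bigl(p(x_{11},\ldots,x_{1n}),\,p(x_{21},\ldots,x_{2n}),\,\ldots,\,p(x_{m1},\ldots,x_{mn})\bigr).$$

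That $\tilde p$ vanishes identically on $A$ is immediate: for any substitution $a_{ij}\in A$, each inner value $b_i:=p(a_{i1},\ldots,a_{in})$ lies in $I$ by choice of $p$, so $\tilde p(a_{ij})=q(b_1,\ldots,b_m)=0$ by the choice of $q$. So the whole content of the lemma is to verify that $\tilde p$ is not the zero element of the free associative algebra on $mn$ generators.

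This non-triviality is the step I expect to require the most care. My plan is a leading-monomial argument: fix a length-lexicographic order on monomials and observe that the substituted copies $p^{(i)}:=p(x_{i1},\ldots,x_{in})$ for $i=1,\ldots,m$ use pairwise disjoint alphabets. Hence the leading monomial of any product $p^{(i_1)}\cdots p^{(i_k)}$ is the literal concatenation of the leading monomials of the individual factors, and these concatenations are distinct for distinct index words $(i_1,\ldots,i_k)$. Consequently the overall leading term of $\tilde p$ is the word obtained by substituting $p$'s leading monomial for each variable in $q$'s leading monomial, with coefficient equal to that of $q$ times a power of that of $p$. Over a field this coefficient is nonzero; over a general commutative ring one adopts the customary convention that a PI has a coefficient equal to $1$, under which one may choose both $p$ and $q$ with unimodular leading term, so the composite again has leading coefficient $1$ and is in particular non-trivial.
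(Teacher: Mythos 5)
Your proposal reproduces the paper's construction of the composite polynomial $q(p(\cdot),\ldots,p(\cdot))$ and the vanishing argument verbatim; where the paper simply asserts that the composite is non-trivial (``it is easy to see''), you supply a leading-monomial argument over disjoint alphabets. This is essentially the same approach with the paper's omitted step filled in, though the assertion that one may arrange a unimodular \emph{leading} term over a general commutative ring is itself not immediate from ``some coefficient equals $1$'' and would more safely be secured by first passing to multilinear $p$ and $q$ and then ordering the variables so that the coefficient-one monomial is lex-maximal.
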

\begin{proof}
Let $I$ be an ideal of an algebra $A$. Suppose that $A/I$  and $I$ satisfy polynomial identities $p$ and $q$, respectively. Let $n$ and $m$ be the numbers of variables in $p$ and $q$, respectively.
If $a_{ij}\in A$ ($i=1,\ldots,n$, $j=1,\ldots,m$), then $p(a_{1j},\ldots,a_{nj})\in I$ for every $j$ because $p(a_{1j}+I,\ldots,a_{nj}+I)\subset I$.
Hence $q(p(a_{11},\ldots,a_{n1}),\ldots,p(a_{1m},\ldots,a_{nm}))=0$. It is easy to see that since $p$ and $q$ are not trivial, this non-commutative polynomial is not trivial. Thus $A$ is a PI-algebra.
\end{proof}

\begin{proof}[Proof of Proposition~\ref{algcr}]
Denote by $S$ the unital subalgebra of $A$ generated by $\mathfrak{s}$ and suppose that $A$ is a PI-algebra.
Then $S$ is a quotient of $U(\mathfrak{s})$ and satisfies a PI. Since we work in characteristic $0$ and $\mathfrak{s}$ is semisimple, we can apply a result of Bahturin, which asserts that every quotient of $U(\mathfrak{s})$ that is a PI-algebra is finite dimensional (see \cite{Ba85}, Theorem~1 and Corollary). In particular, $S$ is finite dimensional. On the other hand, it follows from Lemma~\ref{trrell2} that every element of $\mathfrak{n}$ is nilpotent. The necessity is proved.

Suppose now that $S$ is finite dimensional and every element of $\mathfrak{n}$ is nilpotent. Denote by $I$ the ideal of~$A$ generated by~$\mathfrak{n}$. It follows from Lemma~\ref{extPI} that it suffices to show that $I$ and $A/I$ are PI-algebras.

Denote by  $A_0$ the non-unital subalgebra of~$A$ generated by~$\mathfrak{n}$  and by $U(\mathfrak{n})_0$  the augmentation ideal of $U(\mathfrak{n})$. Note that $\mathfrak{n}$ is a nilpotent Lie algebra. So by Lemma~\ref{nilte},  there is $d\in\mathbb{N}$ such that $A_0^d=0$. Since $\mathfrak{n}$ is a Lie ideal of~$\mathfrak{g}$, it easily follows that $U(\mathfrak{g})U(\mathfrak{n})_0=U(\mathfrak{n})_0 U(\mathfrak{g})$. Therefore $(U(\mathfrak{g})U(\mathfrak{n})_0)^d\subset U(\mathfrak{n})_0^d U(\mathfrak{g})$ and therefore $I^d=A_0^d A=0$. Hence $I$ is nilpotent and, in particular, a PI-algebra.

On the other hand,  being reductive, $\mathfrak{g}/\mathfrak{n}$ is a direct sum of a semisimple summand and an abelian summand $\mathfrak{a}$
\cite[p.\,27, Proposition 1.7.3]{Dix}. Note that the restriction of the map $\mathfrak{g}\to\mathfrak{g}/\mathfrak{n}$ to $\mathfrak{s}$ is injective and so we can identify the semisimple summand in $\mathfrak{g}/\mathfrak{n}$ with $\mathfrak{s}$. Consider the naturally defined homomorphisms $S\to A\to A/I$ and $U(\mathfrak{a})\to U(\mathfrak{g}/\mathfrak{n})\to A/I$. Their ranges commute since $U(\mathfrak{s})$ and $U(\mathfrak{a})$ commute in $U(\mathfrak{g}/\mathfrak{n})$ and
the diagram
\begin{equation*}
\xymatrix{
\mathfrak{s}\ar[d]\ar[r]&\mathfrak{g}/\mathfrak{n} \ar[d]\\
A \ar[r]& A/I
}
\end{equation*}
is commutative. By the universal property of tensor product of associative algebras,  we have the induced homomorphism  $S\otimes U(\mathfrak{a})\to A/I$. This homomorphism is surjective because $A/I$  and $S\otimes U(\mathfrak{a})$ are quotients of $U(\mathfrak{g})/(U(\mathfrak{g})U(\mathfrak{n})_0)$ and $U(\mathfrak{s})\otimes U(\mathfrak{a})$, respectively, and the last two algebras are isomorphic being  isomorphic  to $U(\mathfrak{g}/\mathfrak{n})$. By Regev’s theorem \cite[p.~138, Theorem 3.4.7]{KKR16}, the class of PI-algebras is stable under tensor products. Thus $S\otimes U(\mathfrak{a})$ is a PI-algebra since it is the tensor product of a finite-dimensional and abelian algebra. Being a quotient of a PI-algebra,  $A/I$ also satisfies a PI. This completes the proof of the sufficiency.
\end{proof}

\subsection*{The analytic argument}

Now we turn to auxiliary lemmas, which are needed in the analytic part of the proof of Theorem~\ref{1crit}.
The first lemma is a corollary of a result of Turovskii in~\cite{Tu87}.

\begin{lm}\label{GRlem}
Suppose that the hypotheses in Theorem~\ref{1crit} hold. Then $\mathfrak{n}\subset \Rad B$.
\end{lm}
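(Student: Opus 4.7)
\emph{Proof plan.} The plan is to combine Lemma~\ref{trrell2}(A), applied to the dense (non-closed) subalgebra of~$B$ generated by~$\mathfrak{g}$, with Turovskii's theorem on topological nilpotency from~\cite{Tu87}. Let $A$ denote the associative subalgebra of~$B$ generated by~$\mathfrak{g}$. Since $B$ is generated by $\mathfrak{g}$ as a Banach algebra, $A$ is dense in~$B$, and $\mathfrak{g}$ generates $A$ as an associative algebra, so Lemma~\ref{trrell2}(A) yields $\mathfrak{n}\subset\Rad A$.

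The next step is to transfer this purely algebraic statement to analytic quasinilpotency in~$B$. For each $b\in\mathfrak{n}$ and each $\lambda\in\mathbb{C}$, the element $\lambda b$ again belongs to the ideal $\Rad A$, so $1-\lambda b$ admits an inverse in the unitization~$\tilde A$, and that inverse remains an inverse in~$B$. Hence $\sigma_B(b)\subseteq\{0\}$ and $b$ is quasinilpotent in~$B$. Thus $\mathfrak{n}$ is a finite-dimensional Lie subalgebra of $B$ consisting of quasinilpotent elements, precisely the setup of Turovskii's theorem.

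Turovskii's theorem then ensures that the closed associative subalgebra of~$B$ generated by such an $\mathfrak{n}$ is topologically nilpotent. Combined with the Lie-ideal property $[\mathfrak{g},\mathfrak{n}]\subset\mathfrak{n}$, the identity $U(\mathfrak{g})U(\mathfrak{n})_0=U(\mathfrak{n})_0U(\mathfrak{g})$ already exploited in the proof of Proposition~\ref{algcr}, and the density of $A$ in~$B$, this topological nilpotency propagates to the closed two-sided ideal of~$B$ generated by~$\mathfrak{n}$; every element of that ideal is therefore quasinilpotent in~$B$, and so the ideal is contained in~$\Rad B$, giving $\mathfrak{n}\subset\Rad B$ as required. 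The main obstacle is precisely this final propagation step: converting topological nilpotency of the subalgebra generated by~$\mathfrak{n}$ into quasinilpotency of the whole closed ideal it generates in~$B$, where the Lie-ideal structure of $\mathfrak{n}$ inside $\mathfrak{g}$ is indispensable because it lets one-sided products by elements of~$B$ be rewritten, modulo closures, in terms of elements of the topologically nilpotent subalgebra supplied by Turovskii's theorem.
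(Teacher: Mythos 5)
Your overall strategy is genuinely different from the paper's, and it contains a real gap at exactly the place you yourself flag. The paper's proof is a two-line deduction from a sharper form of Turovskii's theorem: \cite{Tu87} (with proof in \cite[\S\,24, p.\,130, Theorem~1]{BS01}) asserts directly that $[\mathfrak{g},\mathfrak{h}]\subset\Rad B$ for every solvable ideal $\mathfrak{h}$ of $\mathfrak{g}$; since $\mathfrak{n}=[\mathfrak{g},\mathfrak{r}]$ with $\mathfrak{r}$ the solvable radical (\cite[p.\,26, Proposition 1.7.1]{Dix}), one just sets $\mathfrak{h}=\mathfrak{r}$. No appeal to Lemma~\ref{trrell2}(A) and no quasinilpotency argument is needed. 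You instead invoke a \emph{different} Turovskii-type result -- that the closed algebra generated by a nilpotent Lie algebra of quasinilpotent elements is topologically nilpotent -- and this is not what \cite{Tu87}, as cited here, provides; the title and the way the paper uses it make clear that the reference supplies the ``$[\mathfrak{g},\mathfrak{h}]\subset\Rad B$'' statement rather than the topological-nilpotency one.

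More importantly, the propagation step you describe is not an argument but a wish. Topological nilpotency of the closed subalgebra $\overline{A_0}$ generated by $\mathfrak{n}$ does not obviously pass to the closed two-sided ideal of $B$ generated by $\mathfrak{n}$. The algebraic identity $U(\mathfrak{g})U(\mathfrak{n})_0=U(\mathfrak{n})_0U(\mathfrak{g})$ lets one rewrite products formally, but rewriting $a\,a_0$ (with $a\in A$, $a_0\in A_0$) as an element of $A_0A$ provides no control on norms, and a spectral-radius estimate is exactly what the propagation would need. So as written, the step from ``$\overline{A_0}$ is topologically nilpotent'' to ``the closed ideal generated by $\mathfrak{n}$ consists of quasinilpotents, hence lies in $\Rad B$'' is unproved. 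Your preliminary steps (density of $A$, $\mathfrak{n}\subset\Rad A$ via Lemma~\ref{trrell2}(A), hence $\sigma_B(b)=\{0\}$ for $b\in\mathfrak{n}$) are correct and would set up a Volterra/Engel--Turovskii style argument, but they are also unnecessary detours once one has the form of Turovskii's theorem the paper actually uses.
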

\begin{proof}
Note that $\mathfrak{n}=[\mathfrak{g},\mathfrak{r}]$, where $\mathfrak{r}$ is the solvable radical of $\mathfrak{g}$ \cite[p.\,26, Proposition 1.7.1]{Dix}.
Turovskii's theorem \cite{Tu87} asserts that $[\mathfrak{g},\mathfrak{h}]\subset \Rad B$ for every solvable ideal $\mathfrak{h}$ of $\mathfrak{g}$ (for a proof see \cite[\S\,24, p.\,130, Theorem~1]{BS01}).  By putting $\mathfrak{h}=\mathfrak{r}$ we have the result.
\end{proof}

Recall that an element of a Banach algebra is said to be \emph{topologically nilpotent} if $\|b^n\|^{1/n}\to0$ as $n\to \infty$.

\begin{lm}\label{nilpex}
Let $b$ be a topologically nilpotent element of a Banach algebra. Then $b$ is nilpotent if and only if $e^b-1$ is nilpotent. Moreover, $b$ is of degree of nilpotency at most~$d$ if and only if so is $e^b-1$.
\end{lm}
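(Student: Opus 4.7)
The plan is to reduce both directions to the observation that, since $b$ is topologically nilpotent, its spectrum equals $\{0\}$, and hence by the spectral mapping theorem so does the spectrum of $e^b-1$; in particular $e^b-1$ is topologically nilpotent as well. Consequently, both series
\[
e^b - 1 \;=\; \sum_{n\ge 1}\frac{b^n}{n!}
\qquad\text{and}\qquad
\sum_{n\ge 1}\frac{(-1)^{n-1}(e^b-1)^n}{n}
\]
converge absolutely in norm. I would first establish that the second series equals $b$ itself. Working inside the commutative closed unital subalgebra generated by $b$ in the unitisation of~$B$, this follows from the holomorphic functional calculus: the scalar identity $\log(1+(e^z-1))=z$ holds on a neighbourhood of $\{0\}=\sigma(b)$, and the functional calculus respects composition of holomorphic functions.

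For the forward implication, suppose $b^d=0$. Expanding the $d$th power of the exponential series gives
\[
(e^b-1)^d \;=\; \sum_{n_1,\ldots,n_d\ge 1}\frac{b^{n_1+\cdots+n_d}}{n_1!\cdots n_d!},
\]
and every term vanishes, since $n_1+\cdots+n_d\ge d$ forces $b^{n_1+\cdots+n_d}=0$. For the converse, assume $(e^b-1)^d=0$. Then the log-series representation of $b$ terminates, yielding
\[
b \;=\; \sum_{n=1}^{d-1}\frac{(-1)^{n-1}(e^b-1)^n}{n},
\]
so $b$ is a polynomial in $e^b-1$ without constant term. Raising this polynomial to the $d$th power produces only monomials that are scalar multiples of $(e^b-1)^m$ with $m\ge d$, all of which vanish. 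Thus the numerical degree of nilpotency is preserved in both directions.

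The only delicate step is the justification of the inversion identity $b=\log(1+(e^b-1))$. One route is the functional-calculus argument sketched above; alternatively, one can give a bare-hands proof by substituting the exponential series into the logarithm series, observing that the resulting double series converges absolutely under topological nilpotence, rearranging, and invoking the formal power-series identity $\log(1+(e^z-1))=z$.
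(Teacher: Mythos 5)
Your proof is correct. The forward direction (expanding $(e^b-1)^d$ and noting every monomial contains $b^m$ with $m\ge d$) is essentially the same as the paper's. For the converse, however, you take a genuinely different route. You invert the exponential directly: using the composition property of the holomorphic functional calculus you write $b=\log\bigl(1+(e^b-1)\bigr)$, observe that if $(e^b-1)^d=0$ the logarithm series truncates to a polynomial in $r:=e^b-1$ with no constant term, and conclude $b^d$ is a linear combination of $r^m$ with $m\ge d$, hence zero. The paper instead factors $r=b\,f(b)$ where $f(z)=(e^z-1)/z$ (with $f(0)=1$), uses the spectral mapping theorem to show $\sigma(f(b))=\{1\}$ so $f(b)$ is invertible, and divides: $b^d f(b)^d=r^d=0$ gives $b^d=0$. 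Both arguments rest on $\sigma(b)=\{0\}$ and the holomorphic calculus; the paper's is slightly more economical in that it needs only a single invocation of the spectral mapping theorem and an algebraic cancellation, whereas yours requires the composition property $\bigl(g\circ f\bigr)(b)=g(f(b))$ and then a small combinatorial check. Your version has the mild advantage of exhibiting $b$ explicitly as a polynomial in $e^b-1$ of degree $<d$ with zero constant term, which makes the degree bound transparent. Either way, the lemma is proved.
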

\begin{proof}
Put $r\!:=e^b-1$.
If $b^d=0$ for some $d\in\mathbb{N}$, then $r=b+\cdots+ b^{d-1}/(d-1)!$  and therefore $r^d=0$.

Suppose now that there is $d\in\mathbb{N}$ such that $r^d=0$. Since $b$ is topologically nilpotent element, its  spectrum is $\{0\}$. Let  $f$ be the function on $\mathbb{C}$ such that  $f(z)=(e^z-1)/z$ when $z\ne 0$ and $f(0)=1$. Since it is  holomorphic, it follows from  the spectral mapping theorem \cite[Theorem 2.2.23]{X2} that the spectrum of $f(b)$ is $\{1\}$ and therefore  $f(b)$ is invertible.
Since $bf(b)=e^b-1=r$ and $r^d=0$, we have $b^df(b)^d=0$. Since $f(b)$ is invertible, $b^d=0$.
\end{proof}

\begin{lm}\label{eqpogr}
Let $r$ be a topologically nilpotent element of a Banach algebra with the norm $\|\cdot\|$. Then the following conditions are equivalent.

\emph{(1)}~There are $C>0$ and $\al>0$ such that $\|(1+r)^k\|\le C(1+ |k|)^\al$ for all $k\in\mathbb{Z}$.

\emph{(2)}~$r$ is nilpotent.

Moreover, we can assume that $C$ and $\al0$  depend only on  $\|r\|$  and  the degree of nilpotency of~$r$.
\end{lm}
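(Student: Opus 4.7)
The plan is to prove the two directions separately, while tracking the dependence of the constants.

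For $(2) \Rightarrow (1)$, assume $r^d = 0$. I verify the identity
\[
(1+r)^k = \sum_{j=0}^{d-1}\binom{k}{j}r^j \qquad (k \in \mathbb{Z}),
\]
where $\binom{k}{j}$ is the generalized binomial coefficient. For $k \ge 0$ this is the usual binomial expansion truncated by $r^d = 0$; for general $k$ it extends by induction in either direction, using Pascal's rule $\binom{k+1}{j} = \binom{k}{j} + \binom{k}{j-1}$ together with the vanishing of $r^d$ to cancel the would-be $r^d$-term. Estimating $|\binom{k}{j}| \le (|k|+d)^j$ and summing over $0 \le j \le d-1$ gives $\|(1+r)^k\| \le C(1+|k|)^{d-1}$ with $C$ a function of $d$ and $\|r\|$ only. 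This also delivers the uniformity claim in the statement.

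For $(1) \Rightarrow (2)$, the strategy is to combine polynomial growth with the fact that $\sigma(1+r) = \{1\}$ (immediate from $r$ being topologically nilpotent) through a smooth functional calculus. Fix an integer $m > \alpha + 1$. Then every $f \in C^m(\mathbb{T})$ has Fourier coefficients satisfying $|\hat f(k)| = O(|k|^{-m})$, hence $\sum_k |\hat f(k)|(1+|k|)^\alpha < \infty$. So the series $\Phi(f) := \sum_{k \in \mathbb{Z}}\hat f(k)(1+r)^k$ converges absolutely and defines a continuous unital algebra homomorphism $\Phi : C^m(\mathbb{T}) \to B$ sending the coordinate function $z = e^{i\theta}$ to $1+r$. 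By the support theorem from the theory of generalized scalar operators, $\Phi(f)$ depends only on the germ of $f$ at $\sigma(1+r) = \{1\}$, so $\Phi$ is supported at that single point. A continuous linear map from $C^m(\mathbb{T})$ supported at one point is a linear combination of the value and first $m$ derivatives there, hence $\Phi(f) = \sum_{k=0}^{m} c_k f^{(k)}(0)$ for some $c_k \in B$. Applying this to $f_N(\theta) = (e^{i\theta}-1)^N$, whose $k$-th derivative at $\theta = 0$ vanishes for $k < N$, gives $r^N = \Phi(f_N) = 0$ for every $N > m$.

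The main obstacle is the localization statement in the second paragraph, namely $\operatorname{supp}(\Phi) \subseteq \sigma(1+r)$. This is precisely where the "small piece of the theory of generalized scalar operators" referred to in the introduction enters: one must check that the $C^m$-calculus $\Phi$ agrees with the holomorphic functional calculus wherever the latter applies, and then use smooth cut-offs in $C^m(\mathbb{T})$ to reduce to the germ at $\{1\}$. Once this localization is granted, the reduction to a distribution at a single point and the vanishing $r^N = 0$ for $N > m$ are formal. Note also that $m$ depends only on $\alpha$, so the resulting nilpotency degree can be controlled solely in terms of $\alpha$.
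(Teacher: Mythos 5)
The $(2)\Rightarrow(1)$ direction is fully correct and takes a genuinely different, more explicit route than the paper. The paper proves the polynomial bound for $k\ge 0$ from the finite binomial expansion and then handles negative $k$ by writing $(1+r)^{-1}=1+r'$ with $r'$ nilpotent of controlled degree and norm; you instead verify the single identity $(1+r)^k=\sum_{j=0}^{d-1}\binom{k}{j}r^j$ for \emph{all} $k\in\mathbb{Z}$ (which indeed follows from Pascal's rule together with $r^d=0$) and read off the estimate directly, which gives the uniformity claim in one stroke. For $(1)\Rightarrow(2)$, your approach and the paper's are conceptually the same — both go through generalized scalar elements — but they are organized differently. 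The paper cites Laursen--Neumann twice and stops: \cite[Theorem~1.5.12]{LN00} to convert polynomial growth of $\|(1+r)^k\|$ into the statement that $1+r$ is generalized scalar, and \cite[Proposition~1.5.10]{LN00} for the implication ``quasi-nilpotent and generalized scalar $\Rightarrow$ nilpotent''. You instead build a $C^m(\mathbb{T})$-functional calculus $\Phi$ explicitly via Fourier series (essentially re-deriving the content of the first citation), and then correctly observe that the missing ingredient is the support/localization theorem $\operatorname{supp}\Phi\subseteq\sigma(1+r)$; that localization is precisely the machinery behind the second citation, so the gap you flag is the same outsourced result the paper relies on. In short: your $(1)\Rightarrow(2)$ is not wrong, and you've honestly identified where it bottoms out, but it does more hand-work than the paper's version without removing the dependence on the Colojoar\u{a}--Foia\c{s}/Laursen--Neumann support theorem. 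Your $(2)\Rightarrow(1)$ argument, on the other hand, is a clean improvement in explicitness and would be a reasonable replacement for the paper's.
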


Note that $(1+r)^k$ is well defined for negative $k$ because the spectrum of $r$ is $\{0\}$.
\begin{proof}
If (1) holds, then $1+r$ is generalized scalar \cite[p.\,66, Theorem 1.5.12]{LN00}, i.e., admits a $C^\infty$-functional calculus on~$\mathbb{C}$. (This result and the proposition cited below are stated for operators but, in fact, the arguments for them use only the Banach algebra structure.) Shifting by a number does not change the property of being generalized scalar. Thus, $r$ is both topologically nilpotent  and generalized scalar. It follows from \cite[p.\,64, Proposition 1.5.10]{LN00} that then $r$ is nilpotent.

On the other hand, if $r$ is nilpotent, then we immediately have the desired estimate for positive~$k$ with constants $C>0$ and $\al>0$  depending only on the degree of nilpotency of~$r$, say $d$, and $\|r\|$. To prove it for negative~$k$ note that $(1+r)^{-1}=1+r'$  for some nilpotent~$r'$ whose degree of nilpotency and norm depend only on $d$ and $\|r\|$. It follows that~(1) holds with $C$ and $\al$ also depending only on $d$ and $\|r\|$.
\end{proof}

\begin{proof}[Proof of Theorem~\ref{1crit}]
We show that (1)$\Leftrightarrow$(2a) and (2a)$\Rightarrow$(2b)$\Rightarrow$(3b)$\Rightarrow$(4)$\Rightarrow$(3a)$\Rightarrow$(2a).  Let $\mathfrak{s}$ be a Levi subalgebra of~$\mathfrak{g}$, $A$ the associative unital subalgebra of $B$ generated by $\mathfrak{g}$ and $A_0$ the non-unital subalgebra of~$A$ generated by~$\mathfrak{n}$.

(1)$\Leftrightarrow$(2a).
Since $\mathfrak{s}$ is semisimple, it follows from a result of J.~Taylor  \cite{T2} (see also \cite{BS01} or \cite{ArAnF}) that the image of $U(\mathfrak{s})$ in a Banach algebra is finite dimensional. Being dense in $B$, the subalgebra $A$ satisfies a PI if and only if so does $B$. Applying Proposition~\ref{algcr} to~$A$ we conclude that  the conditions~(1) and~(2a) are equivalent.

(2a)$\Rightarrow$(2b). Applying Lemma~\ref{nilte} to $A_0$, we have that it is nilpotent when every element of $\mathfrak{n}$ is nilpotent.

(2b)$\Rightarrow$(3b).
Suppose that $A_0$ is nilpotent, i.e., there is $d\in\mathbb{N}$ such that $A_0^d=0$. In particular,  every $b$ in $\mathfrak{n}$ is of degree of nilpotency at most~$d$  and so is $e^b-1$ by Lemma~\ref{nilpex}.

(3b)$\Rightarrow$(4).
Let $d$ be a positive integer such that $e^b-1$ is nilpotent of degree at most~$d$ for every $b\in \mathfrak{n}$. Applying the implication (2)$\Rightarrow$(1) in  Lemma~\ref{eqpogr}, we have that there are constants $C>0$ and $\al>0$ such that
\begin{equation}\label{erkes}
\|e^{kb}\|\le C(1+ |k|)^\al\qquad \text{for $b\in\mathfrak{n}$ and  $k\in\mathbb{Z}$}
\end{equation}
when $\|e^b-1\|\le 1$. Therefore there is $\ep>0$ such that \eqref{erkes} holds when $\|b\|\le \ep$.

Now fix a non-zero element $b$ of $\mathfrak{n}$. Take $k\in\mathbb{N}$ such that $(k-1)\ep\le \|b\|\le k\ep$ and put $b'\!:=k^{-1}b$. Then $\|b'\|\le \ep$ and
\begin{equation}\label{erkes2}
\|e^{b}\|=\|e^{kb'}\|\le C(1+ k)^\al.
\end{equation}
Since $k\le 1+\ep^{-1}\|b\|$, we have
$$
1+ k\le 2+ \ep^{-1}\|b\|\le 2 \left(1+ \frac{\ep^{-1}}2\right) (1+ \|b\|).
$$
Combining this with \eqref{erkes2}, we have that there is $C'>0$ such that $\|e^b\|\le C' (1+\|b\|)^\al$ and $C'$ is independent in~$b$.

(4)$\Rightarrow$(3a) Suppose that for some $C>0$ and $\al>0$ the inequality $\|e^{b}\|\le C (1+\|b\|)^\al$ holds when $b\in\mathfrak{n}$. We claim that $r\!:=e^b-1$ is topologically nilpotent whenever $b\in\mathfrak{n}$. Indeed,  Lemma~\ref{GRlem} implies that $b\in \Rad B$. Note that $r=\sum_{n=1}^{\infty}{b^n}/{n!}$ and $\Rad B$ is closed. So $r$ is also contained in $\Rad B$. Being an element of the radical of a Banach algebra, $r$~is topologically nilpotent.

Further, it follows from the assumption that
\begin{equation*}
\|(1+r)^k\|=\|e^{kb}\|\le C (1+\|kb\|)^\al\le  C (1+\|b\|)^\al (1+|k|)^\al
\end{equation*}
for every $k\in \mathbb{Z}$. So by the implication (1)$\Rightarrow$(2) in Lemma~\ref{eqpogr}, $r$ is nilpotent.

(3a)$\Rightarrow$(2a).
Suppose that $e^b-1$  is  nilpotent for every  $b\in\mathfrak{n}$.
Then it follows immediately from Lemma~\ref{nilpex} that $b$ is nilpotent.
This completes the proof of Theorem~\ref{1crit}.
\end{proof}

\begin{proof}[Proof of Theorem~\ref{forHom}]
To deduce Theorem~\ref{forHom} from Theorem~\ref{1crit} note that every Lie-algebra homomorphism $\phi\!:\mathfrak{g}_1\to\mathfrak{g}_2$ maps the nilpotent radical of~$\mathfrak{g}_1$ into the nilpotent radical of~$\mathfrak{g}_2$. If, in addition, $\phi$ is surjective,  then so is the  Lie-algebra homomorphism $\mathfrak{r}_1\to\mathfrak{r}_2$ of the solvable radicals; see, e.g., \cite[Lemma~4.10]{ArA06}.
Since the nilpotent radicals of~$\mathfrak{g}_1$ and $\mathfrak{g}_2$
coincide respectively with $[\mathfrak{g}_1,\mathfrak{r}_1]$ and $[\mathfrak{g}_2,\mathfrak{r}_1]$
(see the reference in the proof of Lemma~\ref{GRlem}), we have a surjective Lie-algebra homomorphism of the nilpotent radicals. Thus, the conditions (1), (2a), (2b), (3a) and (3b) are equivalent  since so are the corresponding conditions in Theorem~\ref{1crit}.

The proof of the implication (4)$\Rightarrow$(3a) is a slight modification of the argument for the corresponding implication in Theorem~\ref{1crit}. First note that there is $K>0$ such that
\begin{equation}\label{Keta}
\|\te(\eta)\|\le K|\eta|\quad\text{for every $\eta\in \mathfrak{n}$.}
\end{equation}
Then writing $\te(\eta)$ instead of $b$ and using \eqref{Keta}, we deduce from the inequality
$\|e^{\te(\eta)}\|\le C (1+|\eta|)^\al$ for $\eta\in\mathfrak{n}$ that
$$
\|(1+r)^k\|\le  C (1+|\eta|)^\al (1+|k|)^\al
$$
and then apply Lemma~\ref{eqpogr}.

Finally, if~(1) holds, then so does~(4) in Theorem~\ref{1crit} for elements $b$ of the form $\theta(\eta)$  with $\eta\in\mathfrak{n}$. It follows from~\eqref{Keta} that~(4) in Theorem~\ref{forHom} is also satisfied.
\end{proof}

\subsection*{Some examples}
There are many finite-dimensional Lie algebras such that every completion of the universal enveloping algebra with respect to a submultiplicative prenorm (i.e., every completion that is a Banach algebra) satisfies a polynomial identity. The two simplest cases follow.

\begin{pr}
If a finite-dimensional complex Lie algebra $\mathfrak{g}$ is reductive, then every completion of $U(\mathfrak{g})$ with respect to a submultiplicative prenorm satisfies a \emph{PI}.
\end{pr}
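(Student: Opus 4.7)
The plan is to reduce the statement directly to Theorem~\ref{forHom}. A completion of $U(\mathfrak{g})$ with respect to a submultiplicative prenorm is, by definition, a Banach algebra $B$ equipped with a homomorphism $\theta\!:U(\mathfrak{g})\to B$ of dense range, so Theorem~\ref{forHom} applies.

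First I would recall the structure of reductive Lie algebras. If $\mathfrak{g}$ is reductive, then $\mathfrak{g}=\mathfrak{s}\oplus \mathfrak{a}$ with $\mathfrak{s}$ semisimple and $\mathfrak{a}$ the center of $\mathfrak{g}$, which coincides with the solvable radical $\mathfrak{r}$ (see \cite[p.\,27, Proposition 1.7.3]{Dix}). Using the identification $\mathfrak{n}=[\mathfrak{g},\mathfrak{r}]$ already invoked in the proof of Lemma~\ref{GRlem}, we conclude $\mathfrak{n}=[\mathfrak{g},\mathfrak{a}]=0$.

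With this in hand, the equivalent conditions in Theorem~\ref{forHom} are vacuous: for instance, condition (2a) requires $\theta(\eta)$ to be nilpotent for every $\eta\in\mathfrak{n}=\{0\}$, which holds trivially. Hence (1) holds as well, i.e., $B$ satisfies a PI.

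There is no genuine obstacle here; the entire content of the proposition is absorbed by the preceding main theorem together with the well-known identification of the nilpotent radical of a reductive Lie algebra. If one preferred an independent derivation, one could alternatively argue directly that the image $A=\theta(U(\mathfrak{g}))$ is generated by the commuting subalgebras $\theta(U(\mathfrak{s}))$ and $\theta(U(\mathfrak{a}))$, that the first is finite dimensional by Taylor's theorem and the second is commutative, so $A$ is a quotient of a tensor product of two PI-algebras and is PI by Regev's theorem; then $B$ is PI because it is the closure of a dense PI-subalgebra. But the one-line deduction via Theorem~\ref{forHom} is more economical and is the route I would take.
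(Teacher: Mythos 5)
Your proof is correct and takes essentially the same route as the paper: both observe that a reductive Lie algebra has trivial nilpotent radical, so condition (2a) of Theorem~\ref{forHom} holds vacuously and hence $B$ satisfies a PI. Your extra detail on why $\mathfrak{n}=0$ (via $\mathfrak{n}=[\mathfrak{g},\mathfrak{r}]$ and $\mathfrak{r}$ being the center) and the alternative sketch are fine but not needed.
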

\begin{proof}
Since $\mathfrak{g}$ is reductive, the nilpotent radical  is trivial. Thus by Theorem~\ref{forHom}, every completion of $U(\mathfrak{g})$ with respect to a submultiplicative prenorm satisfies a PI.
\end{proof}

\begin{pr}
Let  $\mathfrak{g}$ be a $2$-dimensional non-abelian  complex Lie algebra. Then every completion of $U(\mathfrak{g})$ with respect to a submultiplicative prenorm satisfies a \emph{PI}.
\end{pr}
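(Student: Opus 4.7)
My plan is to identify the Lie algebra explicitly, compute its nilpotent radical, and then apply Theorem~\ref{forHom} via a classical Banach-algebraic commutator trick.

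First, I would recall that a $2$-dimensional non-abelian complex Lie algebra is, up to isomorphism, $\mathfrak{g}=\mathbb{C}x\oplus\mathbb{C}y$ with $[x,y]=y$. Since $\mathfrak{g}$ is solvable, its solvable radical $\mathfrak{r}$ is $\mathfrak{g}$ itself, and by the formula $\mathfrak{n}=[\mathfrak{g},\mathfrak{r}]$ recalled in the proof of Lemma~\ref{GRlem} (or by noting directly that every one-dimensional representation of $\mathfrak{g}$ annihilates $y=[x,y]$), the nilpotent radical is $\mathfrak{n}=\mathbb{C}y$. Hence any completion $B$ of $U(\mathfrak{g})$ with respect to a submultiplicative prenorm is a Banach algebra endowed with a homomorphism $\te\!:U(\mathfrak{g})\to B$ with dense range, and by Theorem~\ref{forHom} it is enough to verify condition~(2a), i.e.\ that the single element $b\!:=\te(y)$ is nilpotent in $B$.

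For this, I would invoke the standard commutator calculation: setting $a\!:=\te(x)$, the relation $[a,b]=b$ in $B$ yields $[a,b^n]=nb^n$ for all $n\ge 1$, as an immediate induction using $[a,b^{n+1}]=[a,b^n]b+b^n[a,b]$. Therefore $n\,\|b^n\|\le 2\|a\|\,\|b^n\|$, which forces $b^n=0$ as soon as $n>2\|a\|$. Thus $b$ is nilpotent in $B$, condition~(2a) of Theorem~\ref{forHom} holds, and so $B$ is a PI-algebra.

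There is really no substantive obstacle here: once the Lie algebra and its nilpotent radical are written down, the whole statement reduces to the observation that an element $b$ satisfying $[a,b]=b$ in a Banach algebra is necessarily nilpotent. The only thing worth double-checking is that $\mathfrak{n}$ is indeed spanned by $y$ (and not, say, all of~$\mathfrak{g}$), which is settled at once by $\mathfrak{n}=[\mathfrak{g},\mathfrak{g}]$.
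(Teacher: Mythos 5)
Your argument is correct and follows essentially the same route as the paper: identify $\mathfrak{n}=[\mathfrak{g},\mathfrak{g}]$ as the line spanned by the second basis vector, show its image in $B$ is nilpotent, and conclude via Theorem~\ref{forHom}. The only difference is that where the paper cites Pirkovskii's Example~5.1 for the nilpotency of $\pi(e_2)$, you supply the classical commutator estimate $[a,b^n]=nb^n$, giving $n\|b^n\|\le 2\|a\|\,\|b^n\|$ and hence $b^n=0$ for $n>2\|a\|$, which makes the proof self-contained.
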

\begin{proof}
There is a basis $e_1,e_2$ in $\mathfrak{g}$ such that $[e_1,e_2]=e_2$. Note that $\mathfrak{n}=[\mathfrak{g},\mathfrak{g}]=\mathbb{C} e_2$. Let $B$ be a Banach-algebra completion of $U(\mathfrak{g})$ and $\pi$ the  corresponding Lie-algebra homomorphism from $\mathfrak{g}$ to $B$. It is not hard to see that $\pi(e_2)$ is nilpotent; see, e.g, \cite[Example 5.1]{Pir_qfree}.  Then $B$ satisfies a PI by Theorem~\ref{forHom}.
\end{proof}

On the other hand, there are Lie algebras whose universal enveloping algebra admits a completion that does satisfy a PI. Here we give a partial case.

\begin{pr}
Let  $\mathfrak{g}$ be a non-abelian nilpotent complex Lie algebra $\mathfrak{g}$. Then there is a submultiplicative prenorm on $U(\mathfrak{g})$ such that the completion does not satisfy a \emph{PI}.
\end{pr}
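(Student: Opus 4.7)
The plan is to manufacture a PI-completion by factoring through the abelianization and then invoking the triviality that any commutative Banach algebra is a PI-algebra. Since $\mathfrak{g}$ is nilpotent, the derived subalgebra $[\mathfrak{g},\mathfrak{g}]$ is a proper ideal (the lower central series strictly descends), and $\mathfrak{a}:=\mathfrak{g}/[\mathfrak{g},\mathfrak{g}]$ is a non-zero abelian Lie algebra. Its enveloping algebra $U(\mathfrak{a})$ is the symmetric algebra on $\mathfrak{a}$, i.e.\ a polynomial ring $\mathbb{C}[t_1,\ldots,t_k]$ with $k=\dim\mathfrak{a}\ge 1$, which admits an obvious submultiplicative norm $q$: for instance the $\ell^1$-norm $q\bigl(\sum c_\al t^\al\bigr):=\sum|c_\al|$ on monomials in a fixed basis of~$\mathfrak{a}$, making $(U(\mathfrak{a}),q)$ a normed algebra whose completion is the $\ell^1$-convolution algebra on~$\mathbb{N}^k$.

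I would then pull this back to $U(\mathfrak{g})$: let $\pi\!:U(\mathfrak{g})\to U(\mathfrak{a})$ be the canonical unital algebra surjection induced by the Lie-algebra quotient $\mathfrak{g}\to\mathfrak{a}$, and set $p(u):=q(\pi(u))$ for $u\in U(\mathfrak{g})$. Because $\pi$ is a homomorphism, $p$ inherits submultiplicativity from~$q$, so $p$ is a submultiplicative prenorm on~$U(\mathfrak{g})$. Its kernel is precisely the two-sided ideal of~$U(\mathfrak{g})$ generated by $[\mathfrak{g},\mathfrak{g}]$, and the associated normed quotient $U(\mathfrak{g})/\ker p$ is isometrically identified with $(U(\mathfrak{a}),q)$.

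Passing to completions, the Banach-algebra completion $B$ of $U(\mathfrak{g})$ with respect to~$p$ therefore coincides with the completion of the commutative normed algebra $(U(\mathfrak{a}),q)$. Any completion of a commutative normed algebra is commutative, so $B$ satisfies the polynomial identity $xy-yx=0$ and is a PI-algebra. As a sanity check via Theorem~\ref{forHom}, the induced homomorphism $U(\mathfrak{g})\to B$ annihilates $\mathfrak{n}=[\mathfrak{g},\mathfrak{g}]$, so condition~(2a) of that theorem holds vacuously. I anticipate no genuine obstacle in this argument; the only point requiring a moment's care is the convention that the ``completion with respect to a (possibly degenerate) submultiplicative prenorm'' means the Banach completion of the quotient by the prenorm's kernel, which is the standard reading used throughout the paper.
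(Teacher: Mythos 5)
The statement you have been given contains a misprint: the word ``not'' has been dropped. The paper's own proof makes this plain --- it constructs a sum $\te=\oplus_n\te_n$ of finite\-dimensional Hilbert\-space representations whose nilpotency degrees $d_n$ on $\mathfrak{n}$ are unbounded, and concludes by Theorem~\ref{forHom} that \emph{the completion of the range of $\te$ does not satisfy a PI}. The surrounding text confirms this: the two preceding propositions exhibit Lie algebras for which \emph{every} Banach completion is PI, and this proposition is introduced as the contrasting case. The intended assertion is therefore that for a non-abelian nilpotent $\mathfrak{g}$ there exists a submultiplicative prenorm on $U(\mathfrak{g})$ whose completion is \emph{not} a PI-algebra.

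Your argument is internally correct, but it proves a different and essentially vacuous statement. Factoring through $U(\mathfrak{a})$ and pulling back an $\ell^1$-norm does give a commutative, hence PI, Banach completion --- but so does the trivial one-dimensional representation $U(\mathfrak{g})\to\mathbb{C}$, for \emph{any} finite-dimensional Lie algebra whatsoever. The fact that your proof never uses the hypothesis that $\mathfrak{g}$ is non-abelian (nor, really, that it is nilpotent) is the tell-tale sign that you have proved the wrong thing. The genuine content here is the converse direction: non-abelianity forces the existence of finite-dimensional representations $\te_n$ in which $\te_n(\mathfrak{n})$ has arbitrarily large nilpotency degree; assembling these into a single bounded representation $\te=\oplus_n\te_n$ (after renormalization so that the generators stay bounded) produces a Banach completion on which no uniform nilpotency degree $d$ for $\mathfrak{n}$ exists, so condition (2b) of Theorem~\ref{forHom} fails and the completion is not PI. You should redo the proof along these lines: the key lemma is the existence of the unbounded sequence $(d_n)$, which the paper cites from \cite[Proposition 4.14 and Lemma 4.16]{ArA06}, and the key verification is that the direct-sum representation is well defined and bounded after rescaling the generators.
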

\begin{proof}
Since $\mathfrak{g}$ is non-abelian and nilpotent, there  is a sequence $(\te_n)$ of representations of $U(\mathfrak{g})$ on (finite-dimensional) Hilbert spaces such that the sequence $(d_n)$ of the corresponding degrees of nilpotency of $\te_n(\mathfrak{n})$ is unbounded; see, e.g., \cite[Proposition 4.14 and Lemma 4.16]{ArA06}. Using renormalization, we can assume that the set $\{\|\te_n(e_j)\|\}$, where $e_1,\ldots,e_k$ are algebraic  generators of $\mathfrak{g}$, is bounded for every $j$.
Then the infinite sum $\te\!:=\oplus_{n=1}^\infty \te_n$ is a well-defined representation of $U(\mathfrak{g})$ on a Hilbert space.
Since for every $p\in\mathbb{N}_+$ there are $n\in\mathbb{N}_+$ and $\eta\in\mathfrak{n}$ such that $\te_n(\eta)^p\ne 0$,  the completion of the range of $\te$ does not satisfy a PI by Theorem~\ref{forHom}.
\end{proof}

For a general criterion see the follow-up paper \cite{ArPC15}.


\begin{thebibliography}{CCJJV}
\bibitem{AGPR}
E.\,Aljadeff, A.\,Giambruno, C.\,Procesi, A.\,Regev, \emph{Rings with polynomial identities and finite dimensional representations of algebras}, AMS, 2020.

\bibitem{ArAnF}
O.\,Yu.\,Aristov, \emph{Holomorphic functions of exponential type on
connected complex Lie groups}, J. Lie Theory 29:4 (2019),
1045--1070, arXiv:1903.08080.

\bibitem{ArA06}
O.\,Yu.\,Aristov, \emph{Functions of class $C^\infty$ in non-commuting variables in the context of triangular Lie algebras}, Izv. RAN. Ser. Mat. 2022 (to appear),  arXiv: 2103.06143 (in Russian).

\bibitem{ArPC16}
O.\,Yu.\,Aristov, \emph{Length functions exponentially distorted on subgroups of complex Lie groups}, preprint 2022.

\bibitem{ArPC15}
O.\,Yu.\,Aristov, \emph{Decomposition of the algebra of analytic functionals on a connected complex Lie group and its completions into iterated analytic smash products},  preprint 2022 (in Russian).

\bibitem{Ba85}
Yu.\,A.\,Bahturin, \emph{The structure of a PI-envelope of a finite-dimensional Lie algebra}, Izv. Vyssh. Uchebn. Zaved. Mat., 1985, no. 11, 60--62; Soviet Math. (Iz. VUZ), 29:11 (1985), 83--87.

\bibitem{BS01}
D.\,Belti\c{t}\u{a}, M.\,\c{S}abac, \emph{Lie Algebras of bounded operators},   Operator Theory: Advances and Applications, 120, Birkh\"{a}user Verlag, Basel, 2001.

\bibitem{Bou}
N.\,Bourbaki, \emph{Elements of mathematics. Lie groups and Lie algebras. Part I: Chapters 1--3}, Addison-Wesley/Hermann, Paris, 1975.

\bibitem{Dix}
J.\,Dixmier, \emph{Enveloping algebras}, North-Holland Publ., 1977.

\bibitem{Do05}
A.\,A.\,Dosiev (Dosi), \emph{Cohomology of sheaves of Fr\'{e}chet algebras and spectral theory}, Funktsional. Anal. i Prilozhen., 39:3 (2005), 76--80; English transl.: Funct. Anal. Appl., 39:3 (2005), 225--228.

\bibitem{Do09C}
A.\,Dosi, \emph{Fr\'echet sheaves and Taylor spectrum
for supernilpotent Lie algebra of operators}, Mediterr. J. Math. 6 (2009), 181--201.

\bibitem{Do10A}
A.\,A.\,Dosi, \emph{Taylor functional calculus for supernilpotent Lie algebra of
operators}, J. Operator Theory, 63:1 (2010), 191--216.

\bibitem{Do10B}
A.\,A.\,Dosiev (Dosi), \emph{Formally-radical functions in elements of a
nilpotent Lie algebra and noncommutative localizations}, Algebra Colloq.,
17, Sp. Iss. 1 (2010), 749--788.

\bibitem{DoSb}
A.\,A.\,Dosi, \emph{The Taylor spectrum and transversality
for a Heisenberg algebra of operators}, Mat. Sb., 201:3 (2010), 39--62; English transl.: Sb. Math., 201:3 (2010), 355--375.

\bibitem{X2}
A.\,Ya.\,Helemskii,  \emph{Banach and polynormed Algebras: General theory, Representations,
Homology}, Nauka, Moscow, 1989 (Russian); English transl.: Oxford University Press,
1993.

\bibitem{Her}
I.\,N.\,Herstein,  \emph{Noncommutative  Rings},  Carus  Mathematical  Monographs,  15,
Math.  Assoc. Amer./Wiley,  New  York,  1968.

\bibitem{HAW}
I.\,N.\,Herstein, L.\,Small, D.\,J.\,Winter, \emph{A Lie algebra variation on a theorem of Wedderburn}, J. Algebra 144:2 (1991), 496--509.

\bibitem{KKR16}
A.\,Kanel-Belov, Y.\,Karasik, L.\,H.\,Rowen, \emph{Computational aspects of
polynomial identities, Volume I, Kemer's Theorems}, 2nd ed. 2016.

\bibitem{Kr87}
N.\,Ya.\,Krupnik, \emph{Banach algebras with symbol and singular integral operators},
Birkh\"{a}user Verlag, Basel-Boston, 1987.

\bibitem{LN00}
K.\,B.\,Laursen, M.\,M.\,Neumann, \emph{An introduction to local spectral theory},
London Math. Soc. Monographs 20, Clarendon Press, Oxford, 2000.

\bibitem{Mu94}
V.\,M\"{u}ller, \emph{Nil, nilpotent and PI-algebras}, Functional Analysis and Operator Theory, Banach Center Publications, vol. 30, PWN Warsaw 1994, 259--265.

\bibitem{Pir_qfree}
A.\,Yu.\,Pirkovskii, \emph{Arens-Michael envelopes, homological epimorphisms, and
relatively quasi-free algebras}, (Russian), Tr. Mosk. Mat. Obs. 69
(2008), 34--125; English translation in Trans. Moscow Math. Soc. (2008), 27--104.

\bibitem{T2}
J.\,L.\,Taylor, \emph{A general framework for a multi-operator functional calculus},
Adv. Math. 9 (1972), 183--252.

\bibitem{Tu87}
Yu.\,V.\,Turovskii, \emph{On commutativity modulo the Jacobson radical of the associative envelope of a Lie algebra},  Spectral  Theory of Operators and its Applications, 199-211, No.8, ELM, 1987  (Russian). Cited in\,\cite{BS01}.

\end{thebibliography}
\end{document}